\newcommand{\leqnomode}{\tagsleft@true}
\newcommand{\reqnomode}{\tagsleft@false}
\newcolumntype{C}{@{\extracolsep{3cm}}c@{\extracolsep{0pt}}}%
\tikzstyle{startstop} = [rectangle, minimum width=3cm, minimum height=1cm,text centered, draw=black]
\tikzstyle{decision} = [diamond, minimum width=3cm, minimum height=1cm, text centered, draw=black]
\tikzstyle{arrow} = [thick,->,>=stealth]
\newtheorem{prop}{Proposition}
\newtheorem{lemma}[prop]{Lemma}
\newtheorem{theor}[prop]{Theorem}
\newcommand{\contentsnameA}{Appendices}
    \newcommand\tableofcontentsA{%
        \section*{\contentsnameA
            \@mkboth{%
               \MakeUppercase\contentsnameA}{\MakeUppercase\contentsnameA}}%
        \@starttoc{toca}%
        }
\title{Simplification of  inclusion-exclusion on intersections of unions with application to network systems  reliability}
\author{Lukas  Sch{\"a}fer$^{1}$\thanks{Corresponding author at: University of Edinburgh, School of Mathematics, James Clerk Maxwell Building,
The King's Buildings, Peter Guthrie Tait Road, Edinburgh, EH9 3FD, United Kingdom. E-mail address: \href{mailto:lukas.schaefer@ed.ac.uk}{lukas.schaefer@ed.ac.uk}}, Sergio Garc\'{i}a$^{1}$,\\ Vassili Srithammavanh$^2$\\[2mm]
\small{\hspace{0cm}$^1$School of Mathematics, University of Edinburgh, United Kingdom}\\
\small{\hspace{-4.7cm}$^2$AIRBUS Group Innovations, France}}
\date{}
\providecommand{\keywords}[1]{\textbf{\textit{Keywords: }} #1}
\begin{document}
\maketitle

\begin{abstract}
Reliability of safety-critical systems  is a  paramount issue in system engineering because  in most practical situations the reliability of a non series-parallel network system  has to be calculated. Some methods for calculating reliability use the probability principle of inclusion-exclusion. When dealing with complex networks, this leads to very long mathematical expressions which are usually computationally very expensive to calculate. In this paper, we provide a new expression to simplify the probability principle of inclusion-exclusion formula  for intersections of unions, which appear when calculating reliability on non series-parallel network systems. This new expression exploits the presence of many repeated events and has many fewer terms, which significantly reduces  the computational cost. We also show that the general form of the  probability principle of inclusion-exclusion formula  has a double exponential complexity whereas the simplified form has only an exponential complexity  with a linear exponent. Finally, we illustrate how to use this result when calculating the reliability of a door management system in aircraft engineering. 

\end{abstract}
\small{
\keywords{Reliability; non series-parallel systems; inclusion-exclusion} }

\section{Introduction}
\reqnomode
 Reliability of a network system is the probability of the system not failing. It is a critical issue in different fields such as computer networks, information networks or gas networks. In particular, reliability of safety-critical network systems \cite{raus14} is an important topic in system engineering. For example,  in aircraft architecture with safety-critical network systems such as fly-by-wire, actuation, fire warning  and door management systems. In most practical situations, the reliability of a complex network system  (e.g. a system that is not series-parallel) has to be exactly calculated \cite{Hwang1981}. There are several methods to calculate or simulate the reliability of a complex system which have been developed  in recent decades. Some classical static modelling techniques, including reliability block diagram models (\cite{Lev07}), fault tree models, and binary decision diagram models, have been widely used to model static systems. A general introduction to these methods can be found in \cite{raus14}. For time-dependent systems,  modeling techniques such as Markov models \cite{xia12}, dynamic fault tree models \cite{bo92} and Petri net models \cite{Zhong2006} have been used.  In this paper, we propose a new  method to calculate the reliability  for static systems with a new way of writing the classical probability principle of inclusion-exclusion formula. The classical probability principle of inclusion-exclusion formula is 
 
\begin{align}\label{eq:prob_ie}
P(\bigcup_{i=1}^n A_i)=\sum_{i=1}^n\left((-1)^{i+1}\sum_{J\subseteq\{1,\ldots,n\},\atop |J|=i}P\left(\bigcap_{j\in J}A_j\right)\right).
\end{align}
The new method detects which combination of events lead to the same event when simplified and has therefore  many fewer summands than the classical formula for  intersections of unions.

Practical reliability calculations often involve very long expressions when the  probability principle of inclusion-exclusion formula (\ref{eq:prob_ie}) is used. Therefore, there are many approaches in in the literature on general network reliability calculations to simplify the probability principle of inclusion-exclusion as, for example, partitioning techniques \cite{Dot79} and  sum of disjoint products (\cite{Bal03}, \cite{2Rai95}, \cite{Rauzy03}, \cite{Yuan88}). This latter technique is the most often used approach with recent results in \cite{Yeh15}, \cite{Yeh07},  \cite{xin12} or \cite{singh16}. In this paper we propose a new approach to simplify the probability principle of inclusion-exclusion and to apply it to the calculation of the reliability of complex  network systems in system engineering. 
	
In system engineering, most network systems have multiple functions that have to be performed  and they are not always independent (e.g., they share components). Furthermore,  most functions in a system have to be redundant. A function is redundant if two disjoint sets of components of the system can perform that function. Reliability can also be increased if different sets of components in the network can perform the same function.   Therefore, these functions are implemented multiple times in the network system through different sets of components and the calculation of the reliability of the network system becomes a very complex task. 

In our paper, we assume that all failure probabilities of the components are known exactly. We do not consider the case when these probabilities are known only approximately (e.g., either by estimation or a confidence interval). If the  different components of the network are independent of each other, then we can easily calculate the reliability  of a set of components. Through this we can calculate the reliability of  one implementation of a function, which is defined as  the probability of the event that one implementation of the function does not fail. Finally, the probability of an intersection of such events can be calculated  easily. But if full independence cannot be assumed, then the calculation becomes very expensive, usually prohibitive.

 When dealing with optimization problems with reliability constraints, which is the motivation of our research, this calculation cost causes that even models for very small networks are completely intractable. The reason is the large number of variables and non-linear constraints involved in the reliability calculation within the optimization model.  Furthermore the approaches to simplify the  probability principle of inclusion-exclusion formula  mentioned  before (e.g.,  sum of disjoint products) are not suitable to be used within an optimization formulation. Also, approximation, lower or upper bounds for the principal of inclusion-exclusion are not suitable for exact optimization  because they are not in general monotone increasing or decreasing with the reliability. We provide an example for the lower bound in  Appendix~\ref{asec:lbce}. Therefore, in practice, optimization models that involve reliability are usually either solved through heuristics or   series-parallel systems are assumed  (\cite{Coit96}, \cite{kuo01}, \cite{tw13}). 

In this paper, we show how to calculate the reliability of a whole network system in which components are not necessarily independent in a way that requires  considerably fewer calculations (and, thus, it is much cheaper computationally) than the direct use of the probability  inclusion-exclusion principle. The key is to exploit the fact that, when dealing with a network system,  the probability  inclusion-exclusion principle has many repeated terms when applied to intersections of unions. 

The rest of the paper is organized as follows. In Section~\ref{sec:mr} we show why it can be expensive to calculate the reliability of a non series-parallel system. We also state the main result (Proposition~\ref{prop1})  which provides a formula to calculate the reliability for a network system in an exact way with  a much lower number of calculations. In Section~\ref{sec:math} we first state an auxiliary result needed to prove Proposition~\ref{prop1} and then we prove the mentioned proposition. Time complexity is analysed in Section 4. Section~\ref{sec:example} shows an example to illustrate how to calculate reliability using the results of this paper. Finally, some conclusions and future perspectives are discussed in Section~\ref{sec:conclusion}.

\section{Motivation and main result}\label{sec:mr}
We start by showing that, if independence cannot be assumed, then  it can be very expensive to calculate the probability of  a non series-parallel network system with multiple functions and implementations. Afterwards, we introduce a result (Proposition~\ref{prop1}) that reduces the number of calculations involved. 
Let $F_i,\ i\in\{1,\ldots,n\}$, be the event that function $i$ of a system does not fail in a specific period of time and  $F_{ij},\  j\in\{1,\ldots,t_i\},$ be the event that implementation $j$ of function $i$ does not fail in a specific period of time.  Let $\mathcal F=\{F_1,\ldots, F_n\}$ be the set of all functions and $\mathcal F_i=\{F_{i1},\ldots,F_{it_i}\}$ be the set of all implementations of function $i$.  Furthermore, let $R$ be the event that the system does not fail. The reliability of the system, $P(R)$, is the probability that no function in $\mathcal F$ fails. A function $F\in\mathcal F$ does not fail if at least one of its implementations does not fail. Therefore,

\begin{align*}
P(R)=P\left(\bigcap_{i=1}^n F_i \right)=P\left(\bigcap_{i=1}^n\left(\bigcup_{j=1}^{t_i} F_{ij}\right)\right).
\end{align*}
Because the different functions and implementations may not  be independent, $P(R)$ is not easily calculable. In order to work on this expression, first we need to establish some notation. Let 

\begin{align*}
W&=\{1,\ldots,t_1\}\times\hdots\times\{1,\hdots,t_n\},\text{ and }\\
 B_w&=\bigcap_{i=1}^n F_{iw_i}\ \text{ for } w=(w_1,\ldots,w_n)\in W.
 \end{align*}
We then have that 

\begin{align*}
P(R)&=P\left(\bigcap_{i=1}^n\left(\bigcup_{j=1}^{t_i} F_{ij}\right)\right)= P\left(\bigcup_{w\in W}\left(\bigcap_{i=1}^n F_{iw_i}\right)\right)= P\left(\bigcup_{w\in W}B_w\right).
\end{align*}
Now the probability principle of inclusion-exclusion can be used and it follows that 

\begin{align}\label{eq:ie}
P(R)=\sum_{t=1}^{|W|}\left(\left(-1\right)^{t+1}\sum_{I\subseteq W,\atop |I|=t}P\left(\bigcap_{j\in I} B_j\right)\right).
\end{align}
The number of summands  in (\ref{eq:ie}), which is equal to the number of possible intersections of $B_w$'s,  is $\sum_{t=1}^{|W|}\binom{|W|}{t}= 2^{|W|}-1$ with $|W|=\prod_{i=1}^n|\mathcal F_i|$. Therefore, we have  double exponential computational complexity. Table~\ref{t:oldcal} shows the number of summands for different values on the number of functions and implementations with the assumption that every function has the same number of implementations. 
\begin{table}[H]
\centering
\begin{tabular}{c|c | r}
$|\mathcal F|$ &  $|\mathcal F_i|$ & Summands\\
\midrule
  2 & 2&15 \\
    2 & 3&5.11$\times 10^2$	  \\
    2 & 4&6.55$\times 10^4$	  \\
   3 &2 & 2.55$\times 10^2$\\
   3& 3 &1.34$\times 10^7$ \\
   3& 4 & 1.84$\times 10^{17}$\\
   4 & 2 & 6.55$\times 10^4$\\
   4 & 3 & 2.41$\times 10^{24}$ \\
   5 & 2 & 4.29$\times 10^9$\\
   5 & 3 & 1.41$\times 10^{73}$
\end{tabular}
\caption{Number of summands in the probability principle \\of inclusion-exclusion formula.}
\label{t:oldcal}
 \end{table}
As can be seen, even  for a small  number of functions and implementations, the calculation of $P(R)$ becomes very expensive. However, note that there are many (a priori different) terms that, when the intersection of the sets is calculated, lead to the same intersection set, that is, 

 \begin{align}\label{def:repterms}
  \exists I,J\subseteq W:\ I\ne J\ \land\ \bigcap_{i\in I}B_i=\bigcap_{j\in J}B_j.
  \end{align}
 For example, let  $\mathcal F=\{F_1,F_2\}$, $\mathcal F_1=\{F_{11},F_{12}\}$ and $\mathcal F_2=\{F_{21},F_{22}\}$. It follows that   
  
\begin{align*}
P(R)=P\left( (F_{11}\cap F_{21})\cup (F_{11}\cap F_{22})\cup (F_{12}\cap F_{21})\cup (F_{12}\cap F_{22})\right).
\end{align*}
It can be seen, for example, that 

\begin{align*}
B_{(1,1)}\cap B_{(2,2)}&=(F_{11}\cap F_{21})\cap (F_{12}\cap F_{22})\\
&=F_{11}\cap F_{12} \cap F_{21}\cap F_{22}\\
&= (F_{11}\cap F_{21})\cap (F_{11}\cap F_{22})\cap  (F_{12}\cap F_{21})\\
&=B_{(1,1)}\cap B_{(1,2)}\cap B_{(2,1)}.
\end{align*}
Therefore, it seems natural to determine which combinations  lead  to the same intersection set and then simplify the formula. 
The result we are looking for, which will be proved later in the paper, is the following:

\begin{prop}\label{prop1}\ 
Let $\mathcal F=\{F_1,\ldots, F_n\}$ and let $\mathcal F_i=\{F_{i1},\ldots, F_{it_i}\}$, $i\in\{1,\ldots,n\},$ be sets of events such that $F_i=\bigcup_{j=1}^{t_i}F_{ij}$. Let $R=\bigcap_{i=1}^n F_i$,  $m=\sum_{i=1}^n t_i$ and, given $k\in\{n,n+1,\ldots,m\}$, let 

\begin{align*}
 	C_k=&\{\mathbf E=\{E_1,\ldots,E_k\}:\ E_u=F_{i(u)	j(u)}\text{ for some } i(u)\in\{1,\ldots,n\},\ j(u)\in\{1,\ldots,t_{i(u)}\},\ u\in\{1,\ldots,k\},\\ &\ \{i(1),i(2),\ldots,i(k)\}=\{1,\ldots,n\}\text{ and } E_u\ne E_v \text{ for } u\ne v \in\{1,\dots,k\}\}.
\end{align*}
Set $C_k$ is the family of sets $\mathbf E=\{E_1,\ldots, E_k\}$ with cardinality $k$ of events of implementations not failing where every function $i,\ i\in\{1,\dots,n\},$ is implemented at least once. 

It holds that 

\begin{align}\label{eq:prop1}
P(R)=\sum_{k=n}^m\left(\left(-1\right)^{k-n}\sum_{\mathbf E\in C_k}P\left(\bigcap_{j=1}^kE_j\right)\right).
\end{align}
\end{prop}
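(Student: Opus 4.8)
The plan is to begin from the ordinary inclusion--exclusion expansion (\ref{eq:ie}) of $P(R)=P\!\left(\bigcup_{w\in W}B_w\right)$ and then to compress its $2^{|W|}-1$ summands by collecting together all index sets $I\subseteq W$ that produce the same intersection event. The starting observation is the elementary identity that, for a nonempty $I\subseteq W$,
\begin{align*}
\bigcap_{w\in I}B_w=\bigcap_{w\in I}\ \bigcap_{i=1}^nF_{iw_i}=\bigcap_{(i,j)\in S(I)}F_{ij},\qquad S(I):=\bigl\{(i,j):\ \exists\, w\in I\text{ with } w_i=j\bigr\},
\end{align*}
so that $\bigcap_{w\in I}B_w$ depends on $I$ only through the family $\mathbf E(I):=\{F_{ij}:(i,j)\in S(I)\}$ of implementation events ``touched'' by $I$. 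Since a nonempty $I$ selects one implementation of each function for every $w\in I$, the set $S(I)$ meets every function; hence $\mathbf E(I)\in C_k$ with $k=|\mathbf E(I)|\in\{n,\ldots,m\}$, and conversely every member of $\bigcup_{k=n}^m C_k$ is of this form.

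The next step is to describe the fibres of the map $I\mapsto\mathbf E(I)$. Fixing $\mathbf E\in C_k$, put $S_i(\mathbf E):=\{j:F_{ij}\in\mathbf E\}$ (a nonempty set, of size $a_i$, with $a_1+\cdots+a_n=k$) and $W_{\mathbf E}:=\prod_{i=1}^n S_i(\mathbf E)$. Then $\mathbf E(I)=\mathbf E$ holds precisely for the $I\subseteq W_{\mathbf E}$ whose projection onto each coordinate $i$ equals all of $S_i(\mathbf E)$; call these the covering subsets (such an $I$ is automatically nonempty). Regrouping (\ref{eq:ie}) by the value of $\mathbf E(I)$ then gives
\begin{align*}
P(R)=\sum_{k=n}^m\ \sum_{\mathbf E\in C_k} c(\mathbf E)\;P\!\left(\bigcap_{E\in\mathbf E}E\right),\qquad c(\mathbf E):=\sum_{\substack{I\subseteq W_{\mathbf E}\\ I\ \text{covering}}}(-1)^{|I|+1},
\end{align*}
and the whole proposition is reduced to the purely combinatorial identity $c(\mathbf E)=(-1)^{k-n}$, a number that depends only on the side lengths $a_1,\ldots,a_n\ge1$ of the box $W_{\mathbf E}$. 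I would expect exactly this identity (in some equivalent phrasing) to be the auxiliary result announced before the proof of the proposition.

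To prove $c(\mathbf E)=(-1)^{k-n}$ I would run a second inclusion--exclusion over the pairs $(i,j)\in\bigsqcup_{i}(\{i\}\times S_i(\mathbf E))$ that are \emph{missed} by $I$: expanding the covering constraint $\prod_{i,j}\bigl(1-[\,(i,j)\text{ missed by }I\,]\bigr)$ reduces $c(\mathbf E)$ to $\sum_{(D_1,\ldots,D_n)}(-1)^{|D_1|+\cdots+|D_n|}\sum_{I\subseteq\prod_i(S_i(\mathbf E)\setminus D_i)}(-1)^{|I|+1}$ with $D_i\subseteq S_i(\mathbf E)$. Since $\sum_{I\subseteq X}(-1)^{|I|+1}=-(1-1)^{|X|}$ is $0$ unless $X=\varnothing$ (where it is $-1$), only the tuples with $D_i=S_i(\mathbf E)$ for at least one $i$ survive, so $c(\mathbf E)=-\sum_{\exists i:\,D_i=S_i(\mathbf E)}(-1)^{|D_1|+\cdots+|D_n|}$. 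Splitting off the complementary tuples (those with $D_i\subsetneq S_i(\mathbf E)$ for every $i$) and using $\sum_{D_i\subseteq S_i(\mathbf E)}(-1)^{|D_i|}=0$ together with $\sum_{D_i\subsetneq S_i(\mathbf E)}(-1)^{|D_i|}=-(-1)^{a_i}$ yields $c(\mathbf E)=-\bigl(0-\prod_i(-(-1)^{a_i})\bigr)=\prod_i(-1)^{a_i+1}=(-1)^{k+n}=(-1)^{k-n}$. Feeding this coefficient back into the regrouped sum gives precisely (\ref{eq:prop1}).

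I expect the main difficulty to be bookkeeping rather than any deep point: one must verify carefully that the fibres of $I\mapsto\mathbf E(I)$ are exactly the covering subsets of $W_{\mathbf E}$ (no empty $I$ slipping in, and --- since $C_k$ consists of \emph{sets} of events --- no double counting of differently ordered copies of the same $\mathbf E$), and then execute the two nested alternating sums without a sign error; a minor preliminary is to fix the convention that events $F_{ij}$ with distinct index pairs are regarded as distinct, so that $k=|\mathbf E|$ and the $a_i$ are well defined. With the identity $c(\mathbf E)=(-1)^{k-n}$ in hand, the proposition is immediate.
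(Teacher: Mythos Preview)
Your proposal is correct, and its overall architecture coincides with the paper's: both start from the inclusion--exclusion expansion over $W$, observe that $\bigcap_{w\in I}B_w$ depends only on the set $\mathbf E(I)$ of implementation events touched by $I$, regroup the sum over $\mathbf E\in\bigcup_k C_k$, and reduce the proposition to showing that the signed count of covering subsets equals $(-1)^{k-n}$. The paper states exactly this last identity as its auxiliary Lemma~\ref{prop2} (part~(\ref{eq:prop2.2})).

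Where you genuinely diverge is in how that coefficient is computed. The paper proceeds by a two-stage induction: first it derives, by induction on $k$, a closed alternating-sum formula (\ref{eq:prop2.1}) for the number $c(\mathcal A,t)$ of covering subsets of each fixed size $t$, and then, by a second induction on $k-n$, it evaluates $\sum_t(-1)^{t-1}c(\mathcal A,t)$; the argument runs to several pages. Your route is a direct double inclusion--exclusion: you expand the covering indicator over ``missed'' cells $(i,j)$, collapse the inner alternating sum over $I$ via $(1-1)^{|X|}$, and finish with the product identity $\prod_i\bigl(-(-1)^{a_i}\bigr)=(-1)^{k+n}$. This bypasses the size-stratified count entirely and is considerably shorter; on the other hand, the paper's Lemma~\ref{prop2} yields the extra information (\ref{eq:prop2.1}) about $c(\mathcal A,t)$ for each $t$, which your argument does not produce (and which the proof of the proposition does not actually need). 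Your caveat about treating events $F_{ij}$ with distinct index pairs as formally distinct is well placed and matches the paper's implicit convention.
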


Table~\ref{t:newcal} shows the total number of summands that we obtain when we use the result stated in Proposition~\ref{prop1} for different numbers of functions and implementations. We assume that every function has the same number of implementations. In this case, the number of summands can be calculated by summing over $k\in \{n,\dots,m\}$ the cardinalities of $|C_k|$. 
It holds that

\begin{equation}
\sum_{k=n}^m|C_k|=\prod_{i=1}^n\left(2^{|\mathcal F_i|}-1\right),
\end{equation}
which is proven in Lemma~\ref{lemma:timecompl}. Therefore, the expression given by (\ref{eq:prop1}) has  a exponential computational complexity with a linear exponent. 
  \begin{table}[H]
\centering
\begin{tabular}{c|cccccccccc}
$|\mathcal F|$ &2&2&2&3&3&3&4&4&5&5\\
\hline
  $|\mathcal F_i|$ &2&3&4&2&3&4&2&3&2&3\\
\hline
 Summands&9&49	  &225	  &27&343&3375&81&2401&243&16807
\end{tabular}
\caption{Number of summands in formula (\ref{eq:prop1}).}
\label{t:newcal}
 \end{table} 

When comparing Tables~\ref{t:oldcal} and \ref{t:newcal}, we can see an enormous reduction in the number of terms involved to calculate the same value of $P(R)$.

\section{Proof of Proposition~\ref{prop1}}\label{sec:math}
In this section,  we prove Proposition~\ref{prop1} after stating and proving an auxiliary result (Lemma~\ref{prop2}). As we mentioned earlier (see (\ref{def:repterms})), there are different subsets of $W$ in formula~(\ref{eq:ie}) that lead to the same intersection set and, therefore, the same probability.  The first result of the following lemma enables us to count how many different subsets of $W$ with the same cardinality $t\in\{1,\dots,|W|\}$  lead to the same intersection set. The second result of   Lemma~\ref{prop2} gives us the coefficient of an intersection set $\mathbf E$ in formula~(\ref{eq:prop1}).

\begin{lemma}\label{prop2}\ \\
Let $t,  n\in\mathbb N_+$ and  let $A_1,\ldots,A_n$ be non-empty sets with $A_i\cap A_j=\emptyset$ $\forall i\ne j$, $A=\bigcup_{i=1}^nA_i$, $k=|A|$ and  ${D= A_1\times \hdots \times A_n}$. Furthermore, let us  define$ \ {s(e)=\{e_1,\ldots,e_n\}}$ for  ${e=(e_1,\ldots,e_n)}\in D$ and, given ${I\subseteq A}$ and $\mathcal A=(A_1,\ldots,A_n)$, let ${p(I,\mathcal A)=\prod_{i=1}^n|A_i\cap I|= | (A_1 \cap I) \times \hdots \times (A_n \cap I)|
}$. \\
Let $c(\mathcal A,t)$ be the total number of non-empty subsets $\{e^1,\ldots,e^t\}\subseteq D$ of cardinality $t$  such that  \\${\bigcup_{\ell=1}^ts(e^\ell)=A}$.  Then 
\begin{center}
\begin{enumerate}
\item  $c(\mathcal A,t)=\sum_{i=0}^{k-n}(-1)^i\sum_{I\subseteq A,\atop |I|=k-i}\binom{p(I,\mathcal A)}{t}, \qquad\refstepcounter{equation}(\theequation)\label{eq:prop2.1}$\\
\item $\sum_{t=1}^{|D|}(-1)^{t-1}c(\mathcal A,t)=(-1)^{k-n}.\quad\qquad\qquad\refstepcounter{equation}(\theequation)\label{eq:prop2.2}$
\end{enumerate}
\end{center}
\end{lemma}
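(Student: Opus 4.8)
The plan is to prove (\ref{eq:prop2.1}) by inclusion--exclusion over the elements of $A$ that fail to be covered, and then to deduce (\ref{eq:prop2.2}) from it by substituting that formula, interchanging the order of summation, and collapsing the resulting double sum with two elementary binomial identities together with the disjointness of the $A_i$.

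For the first part, I would start from the observation that, for any $I\subseteq A$, the set of $e\in D$ with $s(e)\subseteq I$ is exactly $(A_1\cap I)\times\cdots\times(A_n\cap I)$, which has cardinality $p(I,\mathcal A)$; hence a $t$-element subset $\{e^1,\dots,e^t\}\subseteq D$ satisfies $\bigcup_{\ell=1}^t s(e^\ell)\subseteq I$ precisely when it is a $t$-subset of that product, and there are $\binom{p(I,\mathcal A)}{t}$ such subsets. Counting the subsets with $\bigcup_\ell s(e^\ell)=A$ by inclusion--exclusion over the ``bad'' conditions ``$a\notin\bigcup_\ell s(e^\ell)$'' for $a\in A$ (intersecting the bad conditions indexed by $T\subseteq A$ leaves exactly the $t$-subsets whose union is contained in $A\setminus T$) then gives
\begin{align*}
c(\mathcal A,t)=\sum_{T\subseteq A}(-1)^{|T|}\binom{p(A\setminus T,\mathcal A)}{t}=\sum_{I\subseteq A}(-1)^{k-|I|}\binom{p(I,\mathcal A)}{t}.
\end{align*}
Because the $A_i$ are pairwise disjoint, any $I$ with $|I|<n$ must miss some $A_i$ entirely, so $p(I,\mathcal A)=0$ and that term vanishes (as $t\ge1$); re-indexing the surviving terms by $|I|=k-i$ with $0\le i\le k-n$ and using $(-1)^{k-|I|}=(-1)^i$ produces (\ref{eq:prop2.1}) verbatim.

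For the second part, I would substitute the unrestricted form $c(\mathcal A,t)=\sum_{I\subseteq A}(-1)^{k-|I|}\binom{p(I,\mathcal A)}{t}$ into $\sum_{t=1}^{|D|}(-1)^{t-1}c(\mathcal A,t)$ and swap the two (finite) sums. Since $p(I,\mathcal A)\le|D|$, the inner sum becomes $\sum_{t=1}^{p(I,\mathcal A)}(-1)^{t-1}\binom{p(I,\mathcal A)}{t}$, which equals $1$ if $p(I,\mathcal A)\ge1$ and $0$ if $p(I,\mathcal A)=0$ (immediate from $\sum_{t=0}^{p}(-1)^t\binom{p}{t}=0$ for $p\ge1$). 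So the expression collapses to $\sum(-1)^{k-|I|}$ taken over those $I\subseteq A$ with $A_i\cap I\ne\emptyset$ for every $i$. Using the bijection $I\leftrightarrow(A_1\cap I,\dots,A_n\cap I)$ coming from the disjoint decomposition of $A$, this sum factors as $(-1)^k\prod_{i=1}^n\left(\sum_{\emptyset\ne J\subseteq A_i}(-1)^{|J|}\right)$, and each factor equals $-1$ because $\sum_{J\subseteq A_i}(-1)^{|J|}=0$ (valid since $A_i\ne\emptyset$). This gives $(-1)^k(-1)^n=(-1)^{k+n}=(-1)^{k-n}$, which is (\ref{eq:prop2.2}).

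I do not anticipate a genuine obstacle here; the work is essentially bookkeeping. The two points that need care are, in the first part, checking that the vanishing of $p(I,\mathcal A)$ for $|I|<n$ is exactly what restricts the outer index to $\{0,\dots,k-n\}$, and, in the second part, ensuring the summation interchange is legitimate (it is, as all sums are finite) and that the product factorization really does invoke the hypothesis that the $A_i$ are non-empty and pairwise disjoint.
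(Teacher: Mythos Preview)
Your proof is correct and considerably more streamlined than the paper's. For part~1, the paper proceeds by induction on $k\ge n$: it writes $c(\mathcal A,t)$ as $\binom{|D|}{t}$ minus the count of $t$-subsets whose union is a proper subset of $A$, applies the inductive hypothesis to each such proper subset, and then unwinds a fair amount of algebra (reindexing double sums, a combinatorial identity counting pairs $(I,J)$ with $I\setminus J=L$, and the alternating-sum-of-binomials identity) to arrive at the closed form. Your direct inclusion--exclusion over the missing elements of $A$ reaches the same formula in essentially one line; the only extra observation, that $|I|<n$ forces some $A_i\cap I=\emptyset$, is immediate from disjointness.

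For part~2, both arguments substitute (\ref{eq:prop2.1}) and interchange sums to reduce to evaluating $\sum_{I:\,p(I,\mathcal A)\ne0}(-1)^{k-|I|}$. The paper then proves this equals $(-1)^{k-n}$ by a second induction on $k-n$, splitting off an element $\delta$ from one of the $A_i$ and invoking the hypothesis twice (once for $\mathcal A\setminus\{\delta\}$ and once for a shorter tuple $\hat{\mathcal A}$). Your factorization $(-1)^k\prod_{i=1}^n\bigl(\sum_{\emptyset\ne J\subseteq A_i}(-1)^{|J|}\bigr)=(-1)^k(-1)^n$ is a much cleaner way to finish and makes transparent exactly where the disjointness and non-emptiness hypotheses enter; the paper's inductive route works but obscures this product structure.
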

\reqnomode
\begin{proof}\ \\
1.) First we prove (\ref{eq:prop2.1}) by induction over $k\ge n$. We use $\binom i j=0$ if $j>i$ several times throughout the proof. The first time it is needed is to see that  for all $t$ with $t>|D|$, we have that $c(\mathcal A, t)=0$ because there exist no subsets of $D$  with cardinality strictly greater than $|D|$. Furthermore, if $t>|D|$ and because $p(I,\mathcal A)\le |D|$, the right side of (\ref{eq:prop2.1}) is 0. Therefore, (\ref{eq:prop2.1}) holds for  $t>|D|$. In the following, we assume that $t\le|D|$. 

If  $k=n$, then $|A_1|=\ldots=|A_n|=1$ and $|D|=1$ and we can assume $t=1$. Thus

\begin{align*}
\sum_{i=0}^{k-n}(-1)^i\sum_{I\subseteq A,\atop |I|=k-i}\binom{p(I,\mathcal A)}{t}=(-1)^0\binom{1}{1}=1.
\end{align*}
Moreover, $|D|=1$ means that there exists exactly one vector $e\in D$ and $s(e)=A$. Therefore, $c(\mathcal A,1)=1$ and the formula is correct for $k=n$.

Let us  assume that (\ref{eq:prop2.1}) holds for $n,k,t\in \mathbb N_+$  with $k\ge n$. We will show that it also holds for $k+1$. For this, let $A_1,\ldots,A_n$ be non-empty sets with   $A_i\cap A_j=\emptyset$ for all $i\ne j$, $A=\bigcup_{i=1}^nA_i$ with $|A|=k+1$ and $\mathcal A=(A_1,\ldots,A_n)$. We can write the number of non-empty subsets $\{e^1,\ldots,e^t\}\subseteq D$ of  cardinality $t$ from $D$ with $\bigcup_{l=1}^ts(e^l)=A$ as the number of subsets $\{e^1,\ldots,e^t\}\subseteq D$ of cardinality $t$  minus the number of  subsets $\{e^1,\ldots,e^t\}\subseteq D$ of cardinality $t$ with $\bigcup_{i=1}^ts(e^i)=A\backslash J$ for all non-empty sets $J\subseteq A,\ |J|\le k+1-n$ (because $|A\backslash J|\ge n $) and  $p(A\backslash J,\mathcal A)=\prod_{i=1}^n\left|\left(A_i\cap \left(A\backslash J\right)\right)\right|=\prod_{i=1}^n\left|\left(A_i\backslash J\right)\right|\ne 0$. Since for these sets $A\backslash J$ we have that $|A\backslash J|\le k$, we can use the induction hypothesis to obtain that the number of such sets, for each $J$, is $c(\mathcal A\backslash J,t)$, where we define   $\mathcal A\backslash J=(A_1\backslash J,\ldots,A_n\backslash J)$. Therefore, using that  $|D|=p(A,\mathcal A)$ we have  that

\begin{align*}
c(\mathcal A,t)&=\binom{|D|}{t}- \sum_{j=1}^{k+1-n}\left(\sum_{J\subseteq A,\ |J|=j,\atop p(A\backslash J,\mathcal A)\ne 0}c(\mathcal A\backslash J,t)\right)\\
&= \binom{p(A,\mathcal A)}{t}- \sum_{j=1}^{k+1-n}\left(\sum_{J\subseteq A,\ |J|=j,\atop p(A\backslash J,\mathcal A)\ne 0}\left(\sum_{i=0}^{k+1-j-n}(-1)^i\sum_{I\subseteq A\backslash J, \atop |I|=k+1-j-i}\binom{p(I,\mathcal A\backslash J)}{t}\right)\right).\\
\end{align*}

We can drop the condition $p(A\backslash J,\mathcal A)\ne 0$, because if $p(A\backslash J,\mathcal A)= 0$ then there exists $ i\in\{1,\ldots,n\}$ such  that $\ A_i\backslash J=\emptyset$ and therefore  we have for all $ I\subseteq A\backslash J$ that  $p(I,\mathcal A\backslash J)=0$. Thus, by dropping the condition only zeros are added to the sum. 

Furthermore based on the definition of  function $p$, we know that for all $I,J\subseteq A:$ $$p(I,\mathcal A\backslash J)=\prod_{i=1}^n\left|\left(A_i\backslash J\right)\cap I\right|=\prod_{i=1}^n\left|A_i\cap \left(I\backslash J\right)\right|=p(I\backslash J, \mathcal A).$$  Therefore 

\begin{align}\nonumber
c(\mathcal A,t)=& \binom{p(A,\mathcal A)}{t}- \sum_{j=1}^{k+1-n}\left(\sum_{J\subseteq A,\ |J|=j}\left(\sum_{i=0}^{k+1-j-n}(-1)^i\sum_{I\subseteq A\backslash J,\atop |I|=k+1-j-i}\binom{p(I\backslash J,\mathcal A)}{t}\right)\right)\\\nonumber
=& \binom{p(A,\mathcal A)}{t}- \sum_{j=1}^{k+1-n}\left(\sum_{J\subseteq A,\ |J|=j}\left(\sum_{i=0}^{k+1-j-n}(-1)^{i}\sum_{I\subseteq A,\ J\subseteq I,\atop |I|=k+1-i}\binom{p(I\backslash J,\mathcal A)}{t}\right)\right)\\\label{eq:number1}
=& \binom{p(A,\mathcal A)}{t}- \sum_{j=1}^{k+1-n}\left(\sum_{i=0}^{k+1-j-n}(-1)^{i}\left(\sum_{J\subseteq A,\ |J|=j}\sum_{I\subseteq A,\ J\subseteq I,\atop |I|=k+1-i}\binom{p(I\backslash J,\mathcal A)}{t}\right)\right).
\end{align}
Next, we define $A[\ell]=\{I\subseteq A\ :\ |I|=\ell\},\ \ell\in \mathbb N_+,$ and  we have that\\  

$\forall \ell\in \{1,\ldots, |A|-1\}\ \forall L\in A[\ell]\ \forall j\in\{1,\ldots,|A|-\ell\}:$

\begin{align}\nonumber
&\quad\ |\{(I,J)\in A[\ell+j]\times A[j] \ : \ J\subseteq I, \ I\backslash J=L\}|\\\nonumber
&=|\{(I,J)\in A[\ell+j]\times A[j] \ : \ L\cap J=\emptyset, \ I=L\cup J\}|\\\nonumber
&=|\{(L\cup J,J)\in A[\ell+j]\times A[j]\ : \  L\cap J=\emptyset\}|\\\nonumber
&=|\{J\in A[j]\ : \ J\subseteq A\backslash L\}|\\\label{eq:itcan}
&=\binom{|A|-\ell}{j}.
\end{align}
The last step is known from the general result for unordered sampling without replacement in combinatorics. (\ref{eq:itcan}) can now be  used to rewrite (\ref{eq:number1}) by summing over sets $L=I\backslash J$  with the coefficients $\binom{|A|-\ell}{j}$ instead of summing over $J$ and $I$ separately. We can write now that 

\begin{align}\nonumber
 (\ref{eq:number1})=& \binom{p(A,\mathcal A)}{t}- \sum_{j=1}^{k+1-n}\left(\sum_{\ell=0}^{k+1-j-n}(-1)^{\ell}\sum_{L\subseteq A,\atop |L|=k+1-j-\ell}\left(\binom{|A|-(k+1-j-\ell)}{j}\binom{p(L,\mathcal A)}{t}\right)\right)\\\label{eq:number20}
=& \binom{p(A,\mathcal A)}{t}- \sum_{j=1}^{k+1-n}\left(\sum_{\ell=0}^{k+1-j-n}(-1)^{\ell}\sum_{L\subseteq A,\atop |L|=k+1-j-\ell}\left(\binom{j+\ell}{j}\binom{p(L,\mathcal A)}{t}\right)\right).
\end{align}
Let now $\hat \ell \in\{1,\ldots,k+1-n\}$ and  $L\in A[k+1-\hat \ell]$. It holds that 

\begin{equation*}
\forall j\in\{1,\ldots,\hat \ell\}\ \exists \ell\in\{0,\ldots,k+1-n-j\}:\ j+\ell=\hat \ell
\end{equation*} 
 and that $L$ appears exactly once for all possible combinations $j+\ell=\hat \ell$ with the coefficients $(-1)^{\hat \ell -j}\binom{\hat \ell}{j}=(-1)^{\hat \ell +j}\binom{\hat \ell}{j}$  in (\ref{eq:number20}). 
Therefore, we can take the sum over the sets $L\in A[k+1-\hat \ell]$ for $\hat \ell \in\{1,\ldots,k+1-n\}$ and it holds that 

\begin{align}\nonumber
(\ref{eq:number20})=& \binom{p(A,\mathcal A)}{t}- \sum_{\hat \ell=1}^{k+1-n}\left(\sum_{L\subseteq A,\  p(L,\mathcal A)\ne 0,\atop |L|=k+1-\hat \ell}\left(\sum_{j=1}^{\hat \ell}(-1)^{\hat \ell+j}\binom{\hat \ell}{j}\right)\binom{p(L,\mathcal A)}{t}\right)\\\label{eq:number2}
=& \binom{p(A,\mathcal A)}{t}- \sum_{\hat \ell=1}^{k+1-n}\left(\sum_{L\subseteq A,\  p(L,\mathcal A)\ne 0,\atop |L|=k+1-\hat \ell}(-1)^{\hat \ell}\left(\sum_{j=1}^{\hat\ell}(-1)^{j}\binom{\hat \ell}{j}\right)\binom{p(L,\mathcal A)}{t}\right).
\end{align}
Finally, by  using the following known result from combinatorics

\begin{align}\label{eq:bineq0}
\sum_{i=0}^n(-1)^i\binom{n}{i}=\sum_{i=1}^n(-1)^i\binom{n}{i}+1=0,
\end{align}
 we have that

\begin{align*}
 (\ref{eq:number2}) =& \binom{p(A,\mathcal A)}{t}- \sum_{\hat \ell=1}^{k+1-n}\left(\sum_{L\subseteq A,\atop |L|=k+1-\hat\ell}(-1)^{\hat\ell}\left(-1\right)\binom{p(L,\mathcal A)}{t}\right)\\
=& \binom{p(A,\mathcal A)}{t}+ \sum_{\hat\ell=1}^{k+1-n}\left(\sum_{L\subseteq A,\atop |L|=k+1-\hat\ell}(-1)^{\hat\ell}\binom{p(L,\mathcal A)}{t}\right)\\
=& \sum_{\hat \ell=0}^{k+1-n}\left(\sum_{L\subseteq A,\atop |L|=k+1-\hat\ell}(-1)^{\hat\ell}\binom{p(L,\mathcal A)}{t}\right).
\end{align*}
This completes the proof for  (\ref{eq:prop2.1}). \\

2.) Now we will prove the second result, (\ref{eq:prop2.2}), for $n\ge 1$ and $ k\ge 1$ with  $n\le k$ by induction over $m=k-n$. Let $n\ge 1$, $ k\ge 1$ and let $A_1,\ldots,A_n$ be non-empty sets with   $A_i\cap A_j=\emptyset$  for all $ i\ne j$, $A=\bigcup_{i=1}^nA_i$ with $|A|=k$,  $\mathcal A=(A_1,\ldots,A_n)$  and  ${D= A_1\times \hdots \times A_n}$. We can rewrite (\ref{eq:prop2.2}) by using (\ref{eq:prop2.1}) as follows:

\begin{align}\nonumber
\quad \sum_{t=1}^{|D|}(-1)^{t-1}c(\mathcal A,t)&=\sum_{t=1}^{p(A,\mathcal A)}\left((-1)^{t-1}\sum_{i=0}^{k-n}\left(\sum_{I\subseteq A,\atop |I|=k-i}(-1)^i\binom{p(I,\mathcal A)}{t}\right)\right)\\\nonumber
&=\sum_{t=1}^{p(A,\mathcal A)}\left((-1)^{t-1}\sum_{i=0}^{k-n}\left(\sum_{I\subseteq A, \ p(I,\mathcal A)\ne0,\atop |I|=k-i}(-1)^i\binom{p(I,\mathcal A)}{t}\right)\right)\\
\label{eq:usingbineq0}
&=\sum_{i=0}^{k-n}(-1)^i\left(\sum_{I\subseteq A, \ p(I,\mathcal A)\ne0,\atop |I|=k-i}\sum_{t=1}^{p(I,\mathcal A)}(-1)^{t-1}\binom{p(I,\mathcal A)}{t}\right).
\end{align}
Next we use  (\ref{eq:bineq0}) again and it holds that  

\begin{align}\nonumber
 (\ref{eq:usingbineq0})&=\sum_{i=0}^{k-n}(-1)^i\left(\sum_{I\subseteq A, \ p(I,\mathcal A)\ne0,\atop |I|=k-i}(-1)\left(\sum_{t=0}^{p(I,\mathcal A)}(-1)^{t}\binom{p(I,\mathcal A)}{t}-1\right)\right)\\\nonumber
&=\sum_{i=0}^{k-n}(-1)^i\left(\sum_{I\subseteq A, \ p(I,\mathcal A)\ne0,\atop |I|=k-i}(-1)^2\right).\\\nonumber
&=\sum_{i=0}^{k-n}(-1)^i\left(\sum_{I\subseteq A, \ p(I,\mathcal A)\ne0,\atop |I|=k-i}1\right).\\\nonumber
\shortintertext{If we now modify the external sum on the previous expression to start with $i=n$, it follows that}\label{eq:number3}
 \sum_{t=1}^{|D|}(-1)^{t-1}c(\mathcal A,t)&=(-1)^k\sum_{i=n}^{k}(-1)^{i}\left(\sum_{I\subseteq A, \ p(I,\mathcal A)\ne0,\atop |I|=i}1\right).
\end{align}
If $m=0$ and therefore $n=k$, then 

\begin{align*}
\sum_{t=1}^{|D|}(-1)^{t-1}c(\mathcal A,t)&=(-1)^k\sum_{i=n}^{k}(-1)^{i}\left(\sum_{I\subseteq A, \ p(I,\mathcal A)\ne0,\atop |I|=i}1\right)\\
&=(-1)^n\cdot(-1)^n\cdot1=1=(-1)^{k-n}.
\end{align*}
Therefore, equation  (\ref{eq:prop2.2}) holds for $m=0$.

We assume now that it holds for $n\ge 1$ and $ k\ge 1$  with $m=k-n$ and  $m\ge0$. Without loss of generality, we show that it also holds for $m+1$ with $m+1=(k+1)-n$ by fixing $n$. 
For this, let $A_1,\ldots, A_n$ be non-empty sets with $A_i\cap A_j=\emptyset$ for all $ i\ne j$, and $A=\bigcup_{i=1}^nA_i$ with $|A|=\sum_{i=1}^n|A_i|=k$. Furthermore,  let $A^*_1,A^*_2,\ldots,A^*_n$ be non-empty sets with $A^*=\bigcup_{i=1}^nA^*_i$ and $|A^*|=k+1$. Without loss of generality, let $A_2=A^*_2,\ldots,\ A_n=A^*_n$  and $A_1=A^*_1\backslash\{\delta\}$ with $\delta\notin\bigcup_{i=2}^nA_i$. Also, let $\hat A_1,\ldots, \hat A_{n-1}$ be non-empty sets with $\hat A_1=A_2,\ldots, \hat A_{n-1}=A_n$ and $\hat A=\bigcup_{i=1}^{n-1}\hat  A_i$. 

By using (\ref{eq:number3}), it holds  that 

\begin{align*}
&\quad \sum_{t=1}^{p(A^*,\mathcal A^*)}(-1)^{t-1}c(\mathcal A^*,t)\\
&=(-1)^{k+1}\sum_{i=n}^{k+1}(-1)^{i}\left(\sum_{I\subseteq A^*, \ p(I,\mathcal A^*)\ne0,\atop |I|=i}1\right)\\
&=(-1)\left(\left(-1\right)^k\sum_{i=n}^{k+1}(-1)^i\left(\sum_{I\subseteq A^*, \ p(I,\mathcal A^*)\ne0,\atop |I|=i}1\right)\right).\\
\shortintertext{The sum over the subsets $I$ can be split by considering whether  the subsets contain $\delta$ or not:}
&=(-1)\left(\left(\left(-1\right)^k\sum_{i=n}^{k+1}(-1)^i\left(\sum_{I\subseteq A^*, \ p(I,\mathcal A^*)\ne0,\atop |I|=i,\ \delta\notin I}1\right)\right)+\left(\left(-1\right)^k\sum_{i=n}^{k+1}(-1)^i\left(\sum_{I\subseteq A^*, \ p(I,A^*)\ne0,\atop |I|=i,\ \delta \in I}1\right)\right)\right).\\
&=(-1)\left(\left(\left(-1\right)^k\sum_{i=n}^{k+1}(-1)^i\left(\sum_{I\subseteq A^*\backslash \{\delta\}, \ p(I,\mathcal A^*\backslash \{\delta\})\ne0,\atop |I|=i}1\right)\right)+\left(\left(-1\right)^k\sum_{i=n}^{k+1}(-1)^i\left(\sum_{I\subseteq A^*, \ p(I,A^*)\ne0,\atop |I|=i,\ \delta \in I}1\right)\right)\right).\\
\shortintertext{ Using that $A^*\backslash\{\delta\}=A$ and that no subset of $A$ can be of cardinality $k+1$, we can rewrite the first sum: }
&=(-1)\left(\left(\left(-1\right)^k\sum_{i=n}^{k}(-1)^i\left(\sum_{I\subseteq A, \ p(I,\mathcal A)\ne0,\atop |I|=i}1\right)\right)+\left(\left(-1\right)^k\sum_{i=n}^{k+1}(-1)^i\left(\sum_{I\subseteq A^*, \ p(I,\mathcal A^*)\ne0,\atop |I|=i,\ \delta\in I}1\right)\right)\right)\\
&=(-1)\left(\sum_{t=1}^{p(A,\mathcal A)}(-1)^{t-1}c(\mathcal A,t)+\left(\left(-1\right)^k\sum_{i=n}^{k+1}(-1)^i\left(\sum_{I\subseteq A^*, \ p(I,\mathcal A^*)\ne0,\atop |I|=i,\ \delta\in I}1\right)\right)\right),\\\shortintertext{ where we use (\ref{eq:number3}) to obtain this last inequality. Finally, by using  the induction hypothesis for $m=k-n$: }
&=(-1)\left((-1)^{k-n}+\left(\left(-1\right)^k\sum_{i=n}^{k+1}(-1)^i\left(\sum_{I\subseteq A^*, \ p(I,\mathcal A^*)\ne0,\atop |I|=i,\ \delta\in I}1\right)\right)\right)\\
&=(-1)^{k+1-n}+(-1)\left(\left(-1\right)^k\sum_{i=n}^{k+1}(-1)^i\left(\sum_{I\subseteq A^*, \ p(I,\mathcal A^*)\ne0,\atop |I|=i,\ \delta\in I}1\right)\right).
\end{align*}
To proof that (\ref{eq:prop2.2})  holds for $m+1$, we only have to show that 

\begin{align}\label{eq:prop2.2.m}
\left(-1\right)^k\sum_{i=n}^{k+1}(-1)^i\left(\sum_{I\subseteq A^*, \ p(I,\mathcal A^*)\ne0,\atop |I|=i,\ \delta\in I}1\right)=0.
\end{align}
First, we rewrite the left-hand side of (\ref{eq:prop2.2.m}) as follows:

\begin{align}\nonumber
&\left(-1\right)^k\sum_{i=n}^{k+1}(-1)^i\left(\sum_{I\subseteq A^*, \ p(I,\mathcal A^*)\ne0,\atop |I|=i,\ \delta\in I}1\right)\\\nonumber
=&\left(-1\right)^k\sum_{i=n}^{k+1}(-1)^i\left(\sum_{I\subseteq A^*\backslash\{\delta\}, \ p(I\cup\{\delta\},\mathcal A^*)\ne0,\atop |I|=i-1}1\right)\\\label{eq:prop2.2.m1}
=&\left(-1\right)^k\sum_{i=n-1}^{k}(-1)^{i+1}\left(\sum_{I\subseteq A^*\backslash\{\delta\}, \ p(I\cup\{\delta\},\mathcal A^*)\ne0,\atop |I|=i}1\right).
\end{align}
The following observations are needed to rewrite  (\ref{eq:prop2.2.m1}) further.

\begin{enumerate}
\item First note that  that $A^*\backslash A^*_1=\hat A$, and that for all $I\subseteq \hat A:$

\begin{align}\label{eq:number4}
 p(I\cup\{\delta\},\mathcal A^*)=\prod_{i=1}^n\left|A^*_i\cap (I\cup\{\delta\})\right|=|\{\delta\}|\cdot\prod_{i=2}^n\left|A^*_i\cap I\right|=\prod_{i=1}^{n-1}|\hat A_i\cap I|=p(I,\mathcal{ \hat A}).
\end{align}
\item In addition  $A^*\backslash \{\delta\}=A$ and for all $I\subseteq A$ it holds that  

\begin{align}\nonumber
 p(I\cup\{\delta\},\mathcal A^*)&=\prod_{i=1}^n|A^*_i\cap \left(I\cup \{\delta\}\right)|=(|A_1 \cap I| + 1) \prod_{i=2}^n |A_i^* \cap I|\\
 \shortintertext{ and therefore }\nonumber
p(I\cup\{\delta\},\mathcal A^*)&\ne0 \Leftrightarrow\prod_{i=2}^n|A^*_i\cap I|=\prod_{i=1}^{n-1}|\hat A_i\cap I|=p(I,\mathcal{ \hat A})\ne0.
\end{align}
\item Moreover, it holds that

\begin{align}\label{eq:number6}
\forall I\subseteq A:\ p(I,\mathcal{ \hat A})\ne0\ \land \  |I\cap A_1|\ne 0 \Leftrightarrow p(I,\mathcal{ \hat A})\cdot |I\cap A_1|=p(I,\mathcal A)\ne0.
\end{align}
\end{enumerate}
Now we can rewrite (\ref{eq:prop2.2.m1}) by splitting the sum over subsets $I$ by considering whether or not the subset is disjoint with $A_1$.

\begin{align}\nonumber
&\quad \left(-1\right)^k\sum_{i=n-1}^{k}(-1)^i\left(\sum_{I\subseteq A^*\backslash\{\delta\}, \ p(I\cup\{\delta\},\mathcal A^*)\ne0,\atop |I|=i}1\right)\\\label{eq:number5}
&=\left(-1\right)^k\sum_{i=n-1}^{k}(-1)^i\left(\sum_{I\subseteq A^*\backslash A^*_1, \ p(I\cup \{\delta\},\mathcal A^*)\ne0,\atop |I|=i}1\right)+\left(-1\right)^k\sum_{i=n}^{k}(-1)^i\left(\sum_{I\subseteq A^*\backslash\{\delta\}, \  p(I\cup \delta,\mathcal A^*)\ne0,\atop |I|=i,\ I\cap A_1\ne \emptyset}1\right).\\
\shortintertext{ By using (\ref{eq:number4}), we can rewrite  the sum over $I\subseteq A^*\backslash A_1^*=\hat A$ and using (\ref{eq:number6}) the sum over $I\subseteq A^*\backslash \{\delta\}=A$. Furthermore, we can change the upper limit of the first  sum in (\ref{eq:number5}) to $|\hat A|=k-|A_1|$. Therefore, we can write that (\ref{eq:number5}) is}\nonumber
&=\left(-1\right)^k\sum_{i=n-1}^{k-|A_1|}(-1)^i\left(\sum_{I\subseteq \hat A, \ p(I,\hat A)\ne0,\atop |I|=i}1\right)+\left(-1\right)^k\sum_{i=n}^{k}(-1)^i\left(\sum_{I\subseteq A, \ p(I,A)\ne0,\atop |I|=i}1\right)\\\label{eq:number7}
&=\left(-1\right)^{|A_1|}\left(-1\right)^{k-|A_1|}\sum_{i=n-1}^{k-|A_1|}(-1)^i\left(\sum_{I\subseteq \hat A, \ p(I,\hat A)\ne0,\atop |I|=i}1\right)+\left(-1\right)^k\sum_{i=n}^{k}(-1)^i\left(\sum_{I\subseteq A, \ p(I,A)\ne0,\atop |I|=i}1\right).\\
\shortintertext{ And  because $(k-|A_1|)-(n-1)<m+1$ and $k-n<m+1$, we can use the induction hypothesis on both sums and we use  (\ref{eq:number3}) to obtain that  }\nonumber\\\nonumber
 (\ref{eq:number7})&=\left(\left(-1\right)^{|A_1|}\left(-1\right)^{k-|A_1|-(n-1)}+\left(-1\right)^{k-n}\right)\\\nonumber
&=\left((-1)^{k-(n-1)}+(-1)^{k-n}\right)=0.
\end{align}
Hence, we have proven that 

\begin{align*}
\sum_{t=1}^{|D|}(-1)^{t-1}c(\mathcal A,t)=(-1)^{k-n}
\end{align*}
 holds for any $n\ge1$ and $k\ge1$ with $k-n\ge0$. 
\end{proof}

We can now  prove our main result Proposition~\ref{prop1}.

\begin{proof}[Proof of  Proposition~\ref{prop1}]\ \\ \ \\
Since  $R=\bigcap_{F\in\mathcal F}F$, it follows that

\begin{align}\label{eq:prof1_1}
P(R)=P\left(\bigcap_{i=1}^n F_i \right)=P\left(\bigcap_{i=1}^n\left(\bigcup_{j=1}^{t_i} F_{ij}\right)\right).
\end{align}
Let 

\begin{align*}
W&=\{1,\ldots,t_1\}\times\hdots\times\{1,\ldots,t_n\},\\
 B_w&=\bigcap_{i=1}^n F_{iw_i}\  \text{ and }\\
 \mathfrak B_w&=\{ F_{1w_1},\ldots, F_{nw_n}\}\text{ for } w=(w_1,\ldots,w_n)\in W.
 \end{align*}
 We can rewrite (\ref{eq:prof1_1}) as 

\begin{align*}
P(R)&= P\left(\bigcup_{w\in W}\left(\bigcap_{i=1}^n F_{iw_i}\right)\right)= P\left(\bigcup_{w\in W}B_w\right).
\end{align*}
Using the probability principle of inclusion-exclusion (\ref{eq:prob_ie}), it holds that 

  \begin{align}\label{eq:prof1_2}
 P(R)=\sum_{t=1}^{|W|}\left(\left(-1\right)^{t+1}\sum_{I\subseteq W,\atop |I|=t}P\left(\bigcap_{j\in I} B_j\right)\right).
 \end{align}
 Based on the definition of $B_w,\ w\in W$, we know that 
 
 \begin{align}\label{eq:number8}
 \forall I\subseteq W\ \exists k\in\{n,n+1,\ldots,m\}\ \text{ and } \exists \mathbf E=\{E_1,\dots,E_k\}\in C_k: \bigcap_{j\in I} B_j= \bigcap_{i=1}^k E_i.
 \end{align}
We define for all  $k\in \{n,n+1,\ldots,m\}$ and $\mathbf E\in C_k\ :$

\begin{align*}
D_{\mathbf E}&=\{B_w\ :\ \mathfrak B_w\cap \mathbf  E=\mathfrak B_w \text{ and } w\in W\}.
\end{align*} 
Furthermore  for all $k\in \{n,\ldots,m\},\l\mathbf E\in C_k$ and $ \ell\in\{1,\ldots,|D_{\mathbf E}|\}$ let us define

\begin{align*}
T\left(\mathbf E,\ell\right)&=\left\{I\subseteq W\ : \ |I|=\ell \text{ and } \bigcup_{i\in I}\mathfrak B_i= \mathbf E\right\} \text{ and }\\
t\left(\mathbf E,\ell\right)&= \left|T(\mathbf E,\ell)\right|.
 \end{align*}

If we use (\ref{eq:number8}),  we can rewrite (\ref{eq:prof1_2}) to a sum over $\mathbf E\in C_k,\ k=n,\dots,m,$ where the coefficients  are $\sum_{i=1}^{|W|}\left(-1\right)^{i+1}t(\mathbf E,i)$. Furthermore, we can rewrite the coefficient to $\sum_{i=1}^{|D_{\mathbf E|}}\left(-1\right)^{i+1}t(\mathbf E,i)$, because $t(\mathbf E,\ell)$ is zero for $\mathbf E \in C_k, k=n,\dots,m$ and $\ell\ge1$ if $|D_{\mathbf E}|<\ell$. 
Hence, we have that  (\ref{eq:prof1_2}) is 

\begin{align}\label{eq:number9}
  =\sum_{k=n}^m\left(\sum_{E\in C_k}\left(\sum_{i=1}^{|D_E|}\left(-1\right)^{i+1}t(E,i)\right)P\left(\bigcap_{j=1}^kE_j\right)\right)
\end{align}
In addition, let $\mathcal E=\left(\mathcal F_1\cap \mathbf E,\ldots, \mathcal F_n\cap \mathbf E\right)$. Based on the first result (\ref{eq:prop2.1}) from Lemma~\ref{prop2}, we know  that $ t(\mathbf E,\ell)=c\left(\mathcal E,\ell\right)$ for all $k\in \{n,n+1,\ldots,m\},\ \mathbf E\in C_k$ and $ \ell\in\{1,\ldots,|D_{\mathbf E}|\}$. This gives us that (\ref{eq:number8}) is

  \begin{align*}
  &=\sum_{k=n}^m\left(\sum_{E\in C_k}\left(\sum_{i=1}^{|D_E|}\left(-1\right)^{i+1}c(\mathcal E,i)\right)P\left(\bigcap_{j=1}^kE_j\right)\right)\\
    \shortintertext{ and using (\ref{eq:prop2.2}) from Lemma~\ref{prop2}, it holds that  }
  &=\sum_{k=n}^m\left(\sum_{E\in C_k}(-1)^{k-n}P\left(\bigcap_{j=1}^k E_j\right)\right)\\
   &=\sum_{k=n}^m\left((-1)^{k-n}\sum_{E\in C_k}P\left(\bigcap_{j=1}^k E_j\right)\right).
 \end{align*}
 This completes the proof and we have shown  that (\ref{eq:prop1}) holds. 
\end{proof}

\section{Time complexity of  main result}\label{sec:timecompl}
In this section, we will proof the time complexity of the new formula in Proposition~\ref{prop1} through Lemma~\ref{lemma:timecompl}. 
\begin{lemma}\label{lemma:timecompl}
Let $n\ge1$, $\mathcal F_i=\{F_{i1},\dots,F_{it_{i}}\},\ i\in\{1,\dots,n\},$ be sets of cardinality $t_i,\ t_i\ge1$ and $\mathbf F=(\mathcal F_i,\dots,\mathcal F_n)$. Define ${m\coloneqq\sum_{i=1}^nt_i}$ and,  given $k\in\{n,n+1,\ldots,m\}$,

\begin{align*}
 	C_k^{\mathbf F}\coloneqq&\{\mathbf E=\{E_1,\ldots,E_k\}:\ E_u=F_{i(u)	j(u)}\text{ for some } i(u)\in\{1,\ldots,n\},\ j(u)\in\{1,\ldots,t_{i(u)}\},\ u\in\{1,\ldots,k\},\\ &\ \{i(1),i(2),\ldots,i(k)\}=\{1,\ldots,n\}\text{ and } E_u\ne E_v \text{ for } u\ne v \in\{1,\dots,k\}\}.
\end{align*} 
Lasltly, define $\mathfrak c(\mathbf F)=\sum_{k=n}^m|C_k^{\mathbf  F}|$. It holds that 

\begin{equation}\label{eq:timecomplex1}
\mathfrak c(\mathbf F)=\sum_{k=n}^m|C_k^{\mathbf  F}|=\prod_{i=1}^n\left(2^{|\mathcal F_i|}-1\right).
\end{equation}
\end{lemma}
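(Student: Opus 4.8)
The plan is to recognize the left-hand side as counting a single, easily described family of sets. First I would note that, by the definition of $C_k^{\mathbf F}$, an element $\mathbf E\in C_k^{\mathbf F}$ is precisely a subset of the ground set $G:=\bigcup_{i=1}^n\mathcal F_i$ with $|\mathbf E|=k$ and $\mathbf E\cap\mathcal F_i\neq\emptyset$ for every $i\in\{1,\dots,n\}$: the condition ``$E_u=F_{i(u)j(u)}$ for some $i(u),j(u)$'' together with ``$E_u\neq E_v$ for $u\neq v$'' says exactly that $\mathbf E$ is a $k$-element subset of $G$, and ``$\{i(1),\dots,i(k)\}=\{1,\dots,n\}$'' says exactly that $\mathbf E$ meets every $\mathcal F_i$. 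I would also recall from the setup that the events $F_{ij}$ are pairwise distinct, so $G$ is a disjoint union, $|G|=\sum_{i=1}^n t_i=m$, and a subset of $G$ meeting each of the $n$ disjoint nonempty blocks $\mathcal F_i$ has cardinality between $n$ and $m$; hence no such subset is omitted by restricting $k$ to $\{n,n+1,\dots,m\}$.

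Next I would observe that the families $C_n^{\mathbf F},C_{n+1}^{\mathbf F},\dots,C_m^{\mathbf F}$ are pairwise disjoint, since their members have distinct cardinalities. Therefore
\begin{equation*}
\mathfrak c(\mathbf F)=\sum_{k=n}^m|C_k^{\mathbf F}|=\left|\,\bigcup_{k=n}^m C_k^{\mathbf F}\,\right|=|\mathcal G|,\qquad\text{where }\mathcal G:=\{\mathbf E\subseteq G:\ \mathbf E\cap\mathcal F_i\neq\emptyset\ \forall i\}.
\end{equation*}
So the statement reduces to the claim $|\mathcal G|=\prod_{i=1}^n(2^{|\mathcal F_i|}-1)$.

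Finally I would prove this count by exhibiting a bijection. Consider the map $\Phi:\mathcal G\to\prod_{i=1}^n\big(2^{\mathcal F_i}\setminus\{\emptyset\}\big)$ given by $\Phi(\mathbf E)=(\mathbf E\cap\mathcal F_1,\dots,\mathbf E\cap\mathcal F_n)$. This is well defined because each coordinate $\mathbf E\cap\mathcal F_i$ is a nonempty subset of $\mathcal F_i$. It is injective and surjective because the blocks $\mathcal F_i$ partition $G$: given any tuple $(S_1,\dots,S_n)$ of nonempty subsets $S_i\subseteq\mathcal F_i$, the set $\mathbf E:=\bigcup_{i=1}^n S_i$ lies in $\mathcal G$ and satisfies $\Phi(\mathbf E)=(S_1,\dots,S_n)$, and conversely $\mathbf E$ is recovered from its image as the union of the coordinates. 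Since $|2^{\mathcal F_i}\setminus\{\emptyset\}|=2^{|\mathcal F_i|}-1$, the bijection gives $|\mathcal G|=\prod_{i=1}^n(2^{|\mathcal F_i|}-1)$, which is $(\ref{eq:timecomplex1})$.

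The argument is essentially bookkeeping, so there is no deep obstacle; the only points requiring care are (i) checking that the compressed description of $C_k^{\mathbf F}$ really is ``$k$-subsets of $G$ meeting every block'' (in particular that no repetitions among the $E_u$ are allowed and that the index condition forces hitting all $n$ blocks), and (ii) confirming the range $k\in\{n,\dots,m\}$ loses nothing, i.e.\ that every block-hitting subset of the disjoint union $G$ has size in that range. An alternative route would be induction on $n$ in the spirit of Lemma~\ref{prop2}, peeling off one block $\mathcal F_n$ at a time, but the bijective product argument above is shorter and makes the exponential-with-linear-exponent complexity claim transparent.
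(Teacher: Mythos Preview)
Your proof is correct and considerably more direct than the paper's. The paper proves Lemma~\ref{lemma:timecompl} by induction on $\ell=m-n$: it enlarges one block $\mathcal F_n$ by a single new element $\delta$, splits $C_k^{\mathbf F^*}$ according to whether $\delta\in\mathbf E$ and, in the case $\delta\in\mathbf E$, further according to whether $\mathbf E$ still meets $\mathcal F_n$, and then applies the induction hypothesis to obtain the recursion $\mathfrak c(\mathbf F^*)=2\,\mathfrak c(\mathbf F)+\mathfrak c(\hat{\mathbf F})$, which unwinds to the product $\prod_i(2^{|\mathcal F_i|}-1)$. Your argument bypasses this recursion entirely by first identifying $\bigcup_{k=n}^m C_k^{\mathbf F}$ with the family of all subsets of the disjoint union $G$ that hit every block, and then counting that family via the obvious bijection $\mathbf E\mapsto(\mathbf E\cap\mathcal F_1,\dots,\mathbf E\cap\mathcal F_n)$ to $\prod_i(2^{\mathcal F_i}\setminus\{\emptyset\})$. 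What the paper's inductive approach buys is methodological uniformity with the proof of Lemma~\ref{prop2}; what your bijective argument buys is a one-line count that makes the product structure (and hence the ``exponential with linear exponent'' claim) immediately visible. Your caveats~(i) and~(ii) are exactly the right checks, and both are unproblematic given the implicit disjointness of the $\mathcal F_i$ (which the lemma uses via $m=\sum_i t_i$).
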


\begin{proof}
We proof (\ref{eq:timecomplex1}) by induction. Let $n\ge1$ and $t_i\ge1,\ i\in\{1,\dots,n\}$. We use induction on $\ell=m-n$. Because all $t_i,\ i\in\{1,\dots,n\},$ are greater than 1, $m\ge n$ and can first assume $\ell=0$ with  $m=n$.  With ${m=n}$, we have that $t_1=\hdots=t_n=1$. Therefore $C_n=\{\{F_{11},\dots, F_{n1}\}\}$ and ${\mathfrak c(\mathbf F)=\sum_{k=n}^m|C_k|= |C_n|=1}$. Also $\prod_{i=1}^n\left(2^{|\mathcal F_i|}-1\right)=\prod_{i=1}^n1=1$ and  (\ref{eq:timecomplex1}) holds for $\ell=0$.

We assume now that (\ref{eq:timecomplex1}) holds for $n\ge1$ and $t_i\ge1,\ i\in\{1,\dots,n\}$, with $\ell=m-n$ and $\ell\ge0$. Without loss of generality, we show it also holds for $\ell+1$ with $\ell+1=(m+1)-n$ by fixing $n$. Let $n\ge1$, $\mathcal F_i=\{F_{i1},\dots,F_{it_{i}}\},\ i\in\{1,\dots,n\},$ be sets of cardinality $t_i,\ t_i\ge1$ and $m=\sum_{i=1}^nt_i$. We then know that $\mathfrak c(\mathbf F)=\prod_{i=1}^n\left(2^{|\mathcal F_i|}-1\right)$. Without loss of generality, let $\mathcal{ F}^*_i=\mathcal F_i,\ i\in\{1,\dots,n-1\},$, $\mathcal F_n^*=\mathcal F_n \cup \{\delta\},\ \delta\notin \mathcal F_n$,  and $\mathbf F^*=(\mathcal{ F}^*_1,\dots,\mathcal{ F}^*_n)$. Furthermore, let $\hat{\mathcal F_i}=\mathcal F_i,\ i\in\{1,\dots,n-1\}$ and $\hat{\mathbf F}=(\hat{\mathcal F_1},\dots,\hat{\mathcal F_{n-1}})$. For $k\in\{n,\dots,m+1\}$, we can split the set $C_k^{\mathbf F^*}$ by considering whether or not $\delta$ is contained in  $\mathbf E$ we have that

\begin{align*}
C_k^{\mathbf F^*}&=\{\mathbf E\in C_k^{\mathbf F^*}: \delta \in \mathbf E\}\cup\{\mathbf E\in C_k^{\mathbf F^*}: \delta \notin \mathbf E\} \text{ and } \\
|C_k^{\mathbf F^*}|&=|\{\mathbf E\in C_k^{\mathbf F^*}: \delta \in \mathbf E\}|+|\{\mathbf E\in C_k^{\mathbf F^*}: \delta \notin \mathbf E\}|.  
\end{align*}
For $k\in\{n,\dots,m\}$, $\{\mathbf E\in C_k^{\mathbf F^*}: \delta \notin \mathbf E\}=C_k^{\mathbf F}$ and $\{\mathbf E\in C_{m+1}^{\mathbf F^*}: \delta \notin \mathbf E\}=\emptyset$. By the induction hypothesis, 

\begin{align}\label{eq:1set}
\sum_{k=n}^{m+1}|\{\mathbf E\in C_k^{\mathbf F^*}: \delta \notin \mathbf E\}|=\sum_{k=n}^m|C_k^{\mathbf F}|=\prod_{i=1}^n\left(2^{|\mathcal F_i|}-1\right)=\mathfrak c(\mathbf F).
\end{align}
The set $\{\mathbf E\in C_k^{\mathbf F^*}: \delta \in \mathbf E\}$, $k\in\{n,\dots,m+1\}$, can be further split into two sets by considering if for $E \in  C_k^{\mathbf F^*}$ $\mathcal F_n\cap \mathbf E=\emptyset$ or $\mathcal F_n\cap \mathbf E\ne\emptyset$ and have 

\begin{align*}
\{\mathbf E\in C_k^{\mathbf F^*}: \delta \in \mathbf E\}&=\{\mathbf E\in C_k^{\mathbf F^*}: \delta \in \mathbf E \text{ and } \mathcal F_n\cap \mathbf E=\emptyset\}\cup\{\mathbf E\in C_k^{\mathbf F^*}: \delta \in \mathbf E \text{ and } \mathcal F_n\cap \mathbf E\ne\emptyset \} \text{ and } \\
|\{\mathbf E\in C_k^{\mathbf F^*}: \delta \in \mathbf E\}|&=|\{\mathbf E\in C_k^{\mathbf F^*}: \delta \in \mathbf E \text{ and } \mathcal F_n\cap \mathbf E=\emptyset\}|+|\{\mathbf E\in C_k^{\mathbf F^*}: \delta \in \mathbf E \text{ and } \mathcal F_n\cap \mathbf E\ne\emptyset \}|. 
\end{align*}
Let $\hat m=\sum_{i=1}^{n-1}t_i$. The set $\{\mathbf E\in C_k^{\mathbf F^*}: \delta \in \mathbf E \text{ and } \mathcal F_n\cap \mathbf E=\emptyset\}$, $k\in\{n,\dots,\hat m+1\}$, can also be rewritten as

\begin{align*}
\{\mathbf E\in C_k^{\mathbf F^*}: \delta \in \mathbf E \text{ and } \mathcal F_n\cap \mathbf E=\emptyset\}=\{\mathbf E\cup\{\delta\}: \mathbf E\in C_{k-1}^{\hat{\mathbf F}}\}. 
\end{align*} Therefore it holds that 

\begin{align*} |\{\mathbf E\in C_k^{\mathbf F^*}: \delta \in \mathbf E \text{ and } \mathcal F_n\cap \mathbf E=\emptyset\}|=|\{\mathbf E\cup\{\delta\}: \mathbf E\in C_{k-1}^{\hat{\mathbf F}}\}|=|C_{k-1}^{\hat{\mathbf F}}|.
\end{align*}
Next, since $|\left(\bigcup_{i=1}^n \mathcal F^*_i\right)\cap\mathcal F_n|=\hat m+1$, we have that $\{\mathbf E\in C_k^{\mathbf F^*}: \delta \in \mathbf E \text{ and } \mathcal F_n\cap \mathbf E=\emptyset\}=\emptyset$ for  $k\in\{\hat m+2,\dots, m+1\}$. Because $\hat m+1-(n-1)< (m+1)-n=\ell+1$, we can use the induction hypothesis and it holds that 

\begin{align}\label{eq:2set}
\sum_{k=n}^{m+1}|\{\mathbf E\in C_k^{\mathbf F^*}: \delta \in \mathbf E \text{ and } \mathcal F_n\cap \mathbf E=\emptyset\}|=\sum_{k=n}^{\hat m+1}|C_{k-1}^{\hat{\mathbf F}}|=\sum_{k=n-1}^{\hat m}|C_{k}^{\hat{\mathbf F}}|=\prod_{i=1}^{n-1}\left(2^{|\hat{\mathcal F}_i|}-1\right)=\mathfrak c(\hat{\mathbf F}).
\end{align}
Now we consider the sets $\{\mathbf E\in C_k^{\mathbf F^*}: \delta \in \mathbf E \text{ and } \mathcal F_n\cap \mathbf E\ne\emptyset\}$, $k\in\{n,\dots,m\}$. For $k=n$, let $\mathbf E\in C_k^{\mathbf F^*}$ and we have that $|\mathbf E\cap \mathcal F_i^*|=1$ for $i\in\{1,\dots,n\}$. Therefore, $\{\mathbf E\in C_n^{\mathbf F^*}: \delta \in \mathbf E \text{ and } \mathcal F_n\cap \mathbf E\ne\emptyset\}=\emptyset$. 

For $k\in\{n+1,\dots,m+1\}$ it holds that 

\begin{align*}
\{\mathbf E\in C_k^{\mathbf F^*}: \delta \in \mathbf E \text{ and } \mathcal F_n\cap \mathbf E\ne\emptyset\}= \{\mathbf E\cup\{\delta\}:\mathbf E\in C_{k-1}^{\mathbf F}\}
\end{align*}
and it follows that

\begin{align}\label{eq:3set}
 |\{\mathbf E\in C_k^{\mathbf F^*}: \delta \in \mathbf E \text{ and } \mathcal F_n\cap \mathbf E\ne\emptyset\}|=|\{\mathbf E\cup\{\delta\}: \mathbf E\in C_{k-1}^{\mathbf F}\}|=|C_{k-1}^{\mathbf F}|.
\end{align}
By using the induction hypothesis again, 

\begin{align*}
\sum_{k=n}^{m+1}|\{\mathbf E\in C_k^{\mathbf F^*}: \delta \in \mathbf E \text{ and } \mathcal F_n\cap \mathbf E\ne\emptyset\}|=\sum_{k=n+1}^{m+1}|C_{k-1}^{\mathbf F}|=\sum_{k=n}^{m}|C_{k}^{\mathbf F}|=\prod_{i=1}^n\left(2^{|\mathcal F_i|}-1\right)=\mathfrak c(\mathbf F).
\end{align*}
We can now write 

\begin{align*}
\mathfrak c(\mathbf F^*)&=\sum_{k=n}^m|C_k^{\mathbf  F^*}|\\
&=\sum_{k=n}^m|\{\mathbf E\in C_k^{\mathbf F^*}: \delta \notin \mathbf E\}|+\sum_{k=n}^m|\{\mathbf E\in C_k^{\mathbf F^*}: \delta \in \mathbf E \text{ and } \mathcal F_n\cap \mathbf E=\emptyset\}|\\&\ \ +\sum_{k=n}^m|\{\mathbf E\in C_k^{\mathbf F^*}: \delta \in \mathbf E \text{ and } \mathcal F_n\cap \mathbf E\ne\emptyset\}| 
\shortintertext{ and using (\ref{eq:1set}),  (\ref{eq:2set}) and  (\ref{eq:3set}), we obtain that it is }
&= 2\mathfrak c(\mathbf F)+\mathfrak c(\hat{ \mathbf F})\\&= 2\prod_{i=1}^n\left(2^{|\mathcal F_i|}-1\right)+\prod_{i=1}^{n-1}\left(2^{|\mathcal F_i|}-1\right)\\
&= \left(2\left(2^{|\mathcal F_n|}-1\right)+1\right)\prod_{i=1}^{n-1}\left(2^{|\mathcal F_i|}-1\right)\\&= \prod_{i=1}^{n-1}\left(2^{|\mathcal F_i|}-1\right)*\left(2^{|\mathcal F_n|+1}-1\right)\\
&= \prod_{i=1}^{n}\left(2^{|\mathcal F^*_i|}-1\right).
\end{align*}
This shows that (\ref{eq:timecomplex1}) holds for $\ell+1$ and it completes the proof. 
\end{proof}
\section{Example}\label{sec:example}
In aircraft architecture there are many different types of safety-critical systems such as networked aircraft systems, fire warning systems or stall recovery systems. In this example we consider  the door management system (DMS) of an aircraft which falls into the category of networked aircraft systems.  DMS is a safety-critical system which checks the status of doors, regulates the locks and relays information to on-board computers and pressurization regulators. See Figure~\ref{fig:dms1} for an example of a DMS at one door. 
\begin{figure}[H]
\centering
\includegraphics[width=.35\textwidth]{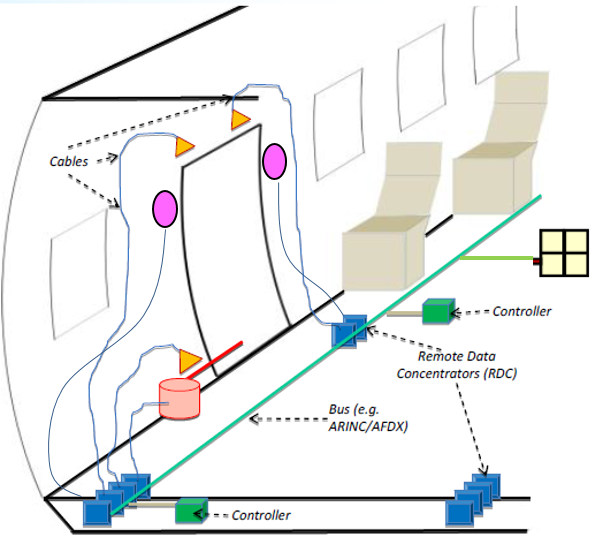}
\caption{Door management system for one door.}
\label{fig:dms1}
\end{figure}
 Figure~\ref{fig:dms1} shows only a part of a DMS  in an aircraft. The DMS is responsible for all doors to the outside in the aircraft as can be seen in Figure~\ref{fig:dms2}. 
 \begin{figure}[H]
\centering
\includegraphics[width=.7\textwidth]{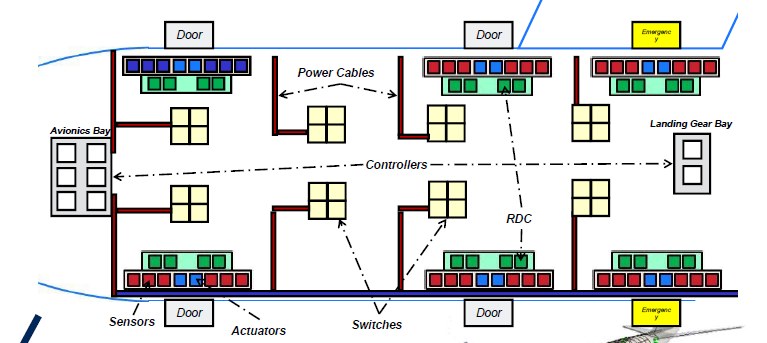}
\caption{Door management system for multiple doors.}
\label{fig:dms2}
\end{figure}
As a safety-critical system which can not be repaired during use,  it must meet some reliability threshold.
As the DMS is responsible for all the doors to the outside in the aircraft and also gives the possibility that different components   (switches, actuators, etc.) can be used by different doors, it is a complex network system for which the calculation of reliability is computationally very expensive as shown in Section~\ref{sec:mr}. However, the computational effort of  the calculation of the reliability of the DMS is reduced considerably by using  Proposition~\ref{prop1}. 

The functionality of the DMS for each door can be seen as a function of the systems. Hence, let the aircraft have $n$ doors and we then have the event set $\mathcal F=\{F_1,\ldots,F_n\}$ where $F_i$ is the event ``The functionality of the DMS for door $i$ in the system does not fail''. Furthermore, let an implementation of the functionality of a door be a set of components and the corresponding connections that can check the status of door, regulate the locks and relay information to on-board computers and pressurization regulators while no subset of these components and connections can be removed without losing functionality. Because the DMS is a safety-critical system that has to be redundant, there are at least two  implementations for every door. Let  $\mathcal F_i=\{F_{i1},\dots,F_{it_i}\}$ and $t_i\ge2$ $,\ i\in\{1,\ldots,n\},$ be event sets where $F_{ij}$ is the event ``The implementation $j$ of door $i$ does not fail''. Lastly, let $R$ be the event ``The DMS system for all doors does not fail''. With these event sets, we can calculate the reliability $P(R)$ of the DMS  with the formula 
\begin{align}\label{eq:prop1.example}
P(R)=\sum_{k=n}^m\left(\left(-1\right)^{k-n}\sum_{E\in C_k}P\left(\bigcap_{j=1}^kE_j\right)\right)
\end{align}
 from Proposition~\ref{prop1}. Furthermore the calculation of $P(\bigcap_{j=1}^kE_j)$ for  $\mathbf E=\{E_1,\dots,E_k\}\in C_k,\ k\in\{n,n+1,\ldots,m\}$ is simple. Let $T_c$ be the event that component $c$ of the DMS system does not fail. Since we are considering a static system, we know the probability of $P(T_c)=a_c$ with $a_c\in(0,1)$. Let $\mathcal T_{F_{ij}}$ be the set of components of implementation $j$ for door $i$. Since we assume that all components have independent failures  for all ${i\in\{1,\ldots,n\}}$ and for all ${ j\in\{1,\ldots,t_i\}}$, then
  \begin{align*}
 P\left(F_{ij}\right)=P\left(\bigcap_{c\in\mathcal  T_{F_{ij}}}T_c\right)=\prod_{c\in \mathcal T_{F_{ij}}}P\left(T_c\right).
\end{align*}
Therefore, for all $ k\in\{n,n+1,\ldots,m\}$ and for all $\mathbf E\in C_k$, it holds that 
\begin{align*}
 P\left(\bigcap_{j=1}^kE_j\right)= P\left(\bigcap_{j=1}^k\left(\bigcap_{c \in \mathcal T_{E_j}}T_c\right)\right)
=  P\left(\bigcap_{c \in \bigcup_{j=1}^k\mathcal T_{E_j}}T_c\right)
=  \prod_{c \in \bigcup_{j=1}^k\mathcal T_{E_j}}P\left(T_c\right).
\end{align*}
If $z$ is  the number of components of the system, we know  that  each summand of (\ref{eq:prop1.example}) is  a product of at most $z$ factors. In  Table~\ref{tab:timecomp}, we show the time in seconds needed to calculate the reliability of DMS systems with different numbers of doors and implementations per door by using Proposition~\ref{prop1} (TNew) and  by using the classic  probability principle of inclusion-exclusion (\ref{eq:ie}) (TOld).  In the header row of Table~\ref{tab:timecomp} we have written the values for pairs ($|\mathcal F|$,$|\mathcal F_i|$) where $|\mathcal F_i|=|\mathcal F_j|$ for all $i,j\in\{1,\dots, |F|\}$. All computations are implemented and run in  R.
\begin{table}[H]
\centering
\begin{tabular}{c|c |c|c|c|c|c|c|c|c|c|c|c|c|c|c|c|c}
($|\mathcal F|$,$|\mathcal F_i|$)&Components&Connections&TNew&TOld\\
\hline
(2,2)&31&47&0.02&0.02\\
(2,2)&31&43&0.02&0.02\\
(2,3)&22&32&0.03&0.12\\
(2,3)&46&63&0.04&0.18\\
(3,2)&37&64&0.05&0.13\\
(3,2)&45&69&0.05&0.14\\
(3,3)&55&94&0.19&$>400$\\
(3,3)&70&102&0.20&$>400$\\
(4,2)&42&71&0.07&29.05\\
(4,2)&55&90&0.08&32.53\\
(4,3)&65&121&1.55&$>400$\\
(4,3)&90&164&1.7081&$>400$\\
(5,2)&50&89&0.17&$>400$\\
(5,3)&53&84&31.07&$>400$
\end{tabular}
\caption{Comparison of computational times for reliability of  different sized DMS.}
\label{tab:timecomp}
 \end{table}
As can be seen, there is a huge reduction on the computational time if we use the expression  (\ref{eq:prop1.example}) that we have introduced in this paper. This difference   increases when the number of doors and implementations does.  Furthermore, it can be observed that the number of components and connections  does not affect the computation time not significantly.
\section{Conclusions}\label{sec:conclusion}

In this paper we have introduced a new expression (Proposition~\ref{prop1}) which reduces considerably  the computational effort needed for the calculation of the probability principle of inclusion-exclusion when applied to  intersections of unions of events. It has been shown that  the formula obtained can be applied to the reliability calculation of   complex network systems and it allows to decrease the computational time significantly. Furthermore, it has also been shown that the  computational complexity is reduced from double exponential to exponential with linear exponent.

This result opens doors  to formulate optimization problems of complex network systems that include the exact reliability of the system without depending solely on heuristics to solve it. 

A potential extension for future research is to  generalize Proposition~\ref{prop1} and Lemma~\ref{prop2}. For example, the implementation for a function $i$ can be also used for function $j$ with $i\ne j$ which results in $\mathcal F_i \cap \mathcal F_j\ne \emptyset$. This can result in a simplification of the probability principle of inclusion-exclusion with summand coefficients that are not $-1$ or $1$ but  still give a decrease on the number of summands compared to Proposition~\ref{prop1}. 
\section*{ Acknowledgements }
The research of Sergio Garc\'{i}a  has been funded by Fundaci\'{o}n S\'{e}neca (project 19320/PI/14). Lukas Sch{\"a}fer is funded by an EPSRC Industrial CASE studentship in partnership with Airbus Group.
\bibliography{literature2}
\appendix
\section{Lower bound of probability principle of inclusion-exclusion }\label{asec:lbce}
When you want to  optimize the reliability of a system, you cannot use lower bounds. To use a lower bound, we need that  the lower bound of the reliability of $T_1$ is lower than the lower bound of the reliability of $T_2$ if the reliability of a system $T_1$ is lower than the reliability of a system $T_2$.  Otherwise, one cannot be sure that the optimal solution you get by using the lower bound is the optimal solution regarding the exact reliability. We did not find a lower bound that is suitable for optimization and fulfills that latter criterion.\\

We show an example of two systems where the reliability of one system is greater than the other, but the lower bound of the reliability of the former is smaller than the latter. This is the reason why it is not suitable to use in optimization. For the example, we use the following lower bound proposed by \cite{daw67}.

Let $A_1,\ldots,A_n$ be the considered events, $S_1=\sum_{k=1}^nP(A_k)$ and $S_2=\sum_{1\le i<j\le n}P(A_i\cap A_j)$. 
 \begin{theor}\cite{daw67}\\
 Given a probability measure space $(\Omega,\mathbb F, P)$, let $A_k\in \mathbb F$, $k=1,\ldots,n$.
 \begin{equation}\label{g:dawlb}
 P\left(\bigcup_{k=1}^nA_k\right)\ge \frac{\theta S_1^2}{2S_2+(2-\theta)S_1}+\frac{(1-\theta)S_1^2}{2S_2+(1-\theta)S_1}
 \end{equation}
 with $\theta =2S_2/S_1-[2S_2/S_1]. $
 \end{theor}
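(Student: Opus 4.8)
The plan is to recognize (\ref{g:dawlb}) as the Dawson--Sankoff lower bound and to prove it by passing to the counting statistic $N=\sum_{k=1}^n\mathbf{1}_{A_k}$, which takes values in $\{0,1,\dots,n\}$. Then $P\!\left(\bigcup_{k=1}^nA_k\right)=P(N\ge1)=E[\mathbf{1}_{\{N\ge1\}}]$, and by linearity of expectation $S_1=E[N]$ and $S_2=E\!\left[\binom{N}{2}\right]$, hence $E[N(N-1)]=2S_2$. So it suffices to prove the purely moment-theoretic statement: for every $\mathbb{Z}_{\ge0}$-valued random variable $N$ with $E[N]=S_1>0$ and $E[N(N-1)]=2S_2$, setting $a=\lfloor 2S_2/S_1\rfloor\ge0$ and $\theta=2S_2/S_1-a\in[0,1)$, one has
\begin{equation*}
P(N\ge1)\ \ge\ \frac{\theta S_1}{a+2}+\frac{(1-\theta)S_1}{a+1}.
\end{equation*}
Substituting $2S_2=(a+\theta)S_1$ into the two denominators of (\ref{g:dawlb}) turns them into $(a+2)S_1$ and $(a+1)S_1$, so the right-hand side of (\ref{g:dawlb}) is exactly this quantity; and the degenerate case $S_1=0$ (all $P(A_k)=0$) is trivial and can be dismissed first.

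The core of the proof is to write down an explicit quadratic polynomial that lies below the indicator $j\mapsto\mathbf{1}_{\{j\ge1\}}$ on $\mathbb{Z}_{\ge0}$ and equals $1$ exactly at the two consecutive integers $a+1$ and $a+2$. I would take
\begin{equation*}
q(x)=1-\frac{\bigl(x-(a+1)\bigr)\bigl(x-(a+2)\bigr)}{(a+1)(a+2)}.
\end{equation*}
Then $q(0)=0$ and $q(a+1)=q(a+2)=1$, and for every integer $j$ the product $\bigl(j-(a+1)\bigr)\bigl(j-(a+2)\bigr)$ is nonnegative (there is no integer strictly between $a+1$ and $a+2$), so $q(j)\le1$; combined with $q(0)=0$ this gives $q(j)\le\mathbf{1}_{\{j\ge1\}}$ for all $j\in\mathbb{Z}_{\ge0}$. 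Rewriting $q$ in the factorial basis as $q(x)=\frac{2(a+1)}{(a+1)(a+2)}x-\frac{1}{(a+1)(a+2)}x(x-1)$ and taking expectations against the law of $N$ yields
\begin{equation*}
P(N\ge1)=E[\mathbf{1}_{\{N\ge1\}}]\ \ge\ E[q(N)]=\frac{2(a+1)S_1-2S_2}{(a+1)(a+2)}=\frac{(a+2-\theta)S_1}{(a+1)(a+2)},
\end{equation*}
and a one-line partial-fraction check rewrites the last fraction as $\frac{\theta S_1}{a+2}+\frac{(1-\theta)S_1}{a+1}$, completing the argument. So the whole proof is short: reduce to $N$, name the quadratic, verify the pointwise inequality, take expectations, simplify.

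The one step that is not mechanical is \emph{discovering} this quadratic, i.e.\ seeing why the relevant index must be $a=\lfloor 2S_2/S_1\rfloor$ and why the fractional part $\theta$ appears in the answer. I would arrive at it by viewing ``minimize $P(N\ge1)$ subject to $E[N]=S_1$, $E[N(N-1)]=2S_2$'' as a linear program over measures on $\mathbb{Z}_{\ge0}$: its optimal primal measure is supported on the three points $\{0,\,a+1,\,a+2\}$, and the dual-optimal certificate is precisely the quadratic $q$ above, so complementary slackness both pinpoints $q$ and shows the bound is attained. If I wanted to exhibit tightness explicitly I would write down that three-point distribution, whose masses at $a+1$ and $a+2$ work out to $\frac{(1-\theta)S_1}{a+1}$ and $\frac{\theta S_1}{a+2}$, matching the bound term by term. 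The only bookkeeping subtlety is the boundary case $\theta=0$ (when $2S_2/S_1$ is an integer), where the first summand vanishes and the extremal distribution collapses onto $\{0,a+1\}$; the quadratic argument above needs no change there, so no separate case analysis is required.
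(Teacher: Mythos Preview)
Your proof is correct: the reduction to the counting variable $N$, the quadratic certificate $q(x)=1-\frac{(x-(a+1))(x-(a+2))}{(a+1)(a+2)}$, the pointwise inequality $q(j)\le\mathbf{1}_{\{j\ge1\}}$ on $\mathbb{Z}_{\ge0}$, and the algebra turning $E[q(N)]$ into the stated bound all check out line by line.

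However, there is nothing in the paper to compare against. The paper does not prove this theorem; it merely quotes it from Dawson and Sankoff \cite{daw67} and then, in the sentence that follows, specializes it to $\theta=0$ to obtain the weaker bound $P\bigl(\bigcup_k A_k\bigr)\ge S_1^2/(2S_2+S_1)$, justifying the specialization by a one-line concavity argument in $\theta$. The theorem itself is treated as a black-box citation, used only to motivate a counterexample showing that such lower bounds need not be monotone in the true reliability. So your write-up supplies a complete proof where the paper offers none; if anything, your argument is more than what is required in context, since the appendix only ever uses the $\theta=0$ consequence.
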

 Because it is not easy to define $\theta$ in an optimization constraint, we consider the minimum of the right-hand side which occurs with $\theta=0$ and we obtain that
  \begin{equation}\label{m:dawlb}
 P\left(\bigcup_{k=1}^nA_k\right)\ge \frac{S_1^2}{2S_2+S_1}.
 \end{equation}
 Calculating the derivatives, it is easy to see that $\theta=0$ gives the minimum of the right-hand side, if the lower bound given by the right-hand side is considered as function $f$ of $\theta$. It is easily  seen that $f$ is a concave function for $0\le\theta\le1$ and $f(0)=f(1)$ . This gives you that $f(0)\le f(\theta)$ for $0\le\theta<1$.
 
 
For the example, let us assume that we have a system with one function and the function is  implemented three times. Let $A$, $B$ and $C$ be the implementations. 
We can calculate for system $T$  the exact reliability $P(T)$  as follows :
\begin{equation*}
P(T)=P(A\cup B\cup C)=P(A)+P(B)+P(C)-P(A\cap B) -  P(A\cap C)-P(B\cap C)+P(A\cap B\cap C).
\end{equation*}
We will also calculate a lower bound based on (\ref{m:dawlb}).
Let $T_1$ and $T_2$ be as seen in  Figures~\ref{fig:T1} and~ \ref{fig:T2}, respectively. Furthermore, let $P(1)=0.5,\ P(2)=0.7,\ P(3)=0.2,\ P(4)=0.6$ and $P(5)=0.3$ be the probabilities of the unit not failing for $T_1$ and take $P(3)=0.2521$ for $T_2$. 
We obtain that $P(T_1)=0.2668$ and that $P(T_2)=0.2668232$. Therefore, the reliability of $T_2$ is greater than the reliability of $T_1$. But if we look at the lower bounds, we have that $P(T_1)_{lb}=0.2260049$ and $P(T_2)_{lb}=0.2257831$. Therefore, we can see that the lower bound of $T_1$ is greater than the lower bound of $T_2$. This shows us that the lower bounds are not monotone increasing with the reliability and therefore should be used for exact optimization of reliability systems. 
Another example is system $T_3$ in Figure~\ref{fig:T3}. Let $1, 2, 4$ and 5 have the same probabilities of not failing as for $T_1$ and  let  $P(3)=0.3$. We obtain that  $P(T_3)=0.261$ for $T_3$ which is smaller than $P(T_1)$ and $P(T_2)$. But the lower bound of $P(T_3)$ is $P(T_3)_{lb}=0.2278481$ which is greater than the lower bound of $P(T_1)$ and $P(T_2)$. 
\begin{figure}[H]
\center
\begin{tikzpicture}[line width=1pt]
\node [rectangle,draw]   (1) at (-5, 4) {1};
\node [rectangle,draw]   (2) at (-3, 4){2};
\node [rectangle,draw]   (3) at (-1, 4){3};
\node [rectangle,draw]   (4) at (-5, 3){4};
\node [rectangle,draw]   (5) at (-3, 3){5};
\node[draw=none] (d1t) at (-10,4) {A};
\node[draw=none] (d1t) at (-10,3) {B };
\node[draw=none] (d1t) at (-10,2) {C};
	\draw[->,color=red]  (1) to (2);
	\draw[->,color=red]([yshift=-6pt,xshift=6pt]2.north)--node{}([yshift=-6pt,xshift=-6pt]3.north);
	\draw[->,color=blue]([yshift=6pt,xshift=6pt]2.south)--node{}([yshift=6pt,xshift=-6pt]3.south);
	\draw[->,color=blue]  (4) to (2);
	\draw[->,color=green]  (4) to (5);
	\draw[red] (-9,4) -- (-7,4);
	\draw[blue] (-9,3) -- (-7,3);
	\draw[green] (-9,2) -- (-7,2);
\end{tikzpicture}
\caption{System $T_1$} 
\label{fig:T1}
\end{figure}
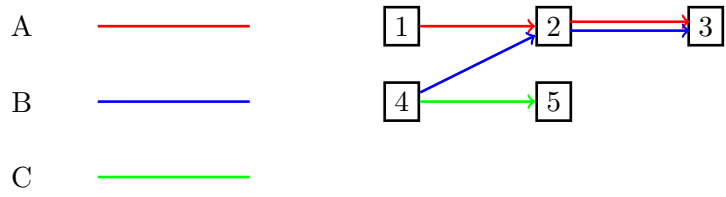
\begin{figure}[H]
\center
\begin{tikzpicture}[line width=1pt]
\node [rectangle,draw]   (1) at (-5, 4) {1};
\node [rectangle,draw]   (2) at (-3, 4){2};
\node [rectangle,draw]   (3) at (-1, 4){3};
\node [rectangle,draw]   (4) at (-5, 3){4};
\node [rectangle,draw]   (5) at (-3, 3){5};
\node[draw=none] (d1t) at (-10,4) {A};
\node[draw=none] (d1t) at (-10,3) {B };
\node[draw=none] (d1t) at (-10,2) {C};
	\draw[->,color=red]([yshift=-6pt,xshift=6pt]1.north)--node{}([yshift=-6pt,xshift=-6pt]2.north);
	\draw[->,color=blue]([yshift=6pt,xshift=6pt]1.south)--node{}([yshift=6pt,xshift=-6pt]2.south);
		\draw[->,color=red]([yshift=-6pt,xshift=6pt]2.north)--node{}([yshift=-6pt,xshift=-6pt]3.north);
	\draw[->,color=blue]([yshift=6pt,xshift=6pt]2.south)--node{}([yshift=6pt,xshift=-6pt]3.south);
	\draw[->,color=green]  (4) to (5);
	\draw[red] (-9,4) -- (-7,4);
	\draw[blue] (-9,3) -- (-7,3);
	\draw[green] (-9,2) -- (-7,2);
\end{tikzpicture}
\caption{System $T_2$} 
\label{fig:T2}
\end{figure}
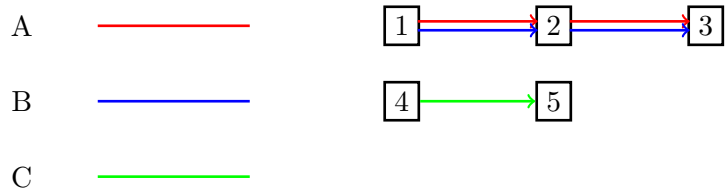
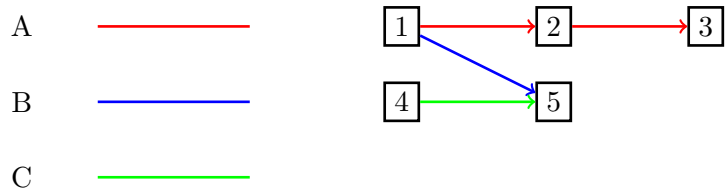
\begin{figure}[H]
\center
\begin{tikzpicture}[line width=1pt]
\node [rectangle,draw]   (1) at (-5, 4) {1};
\node [rectangle,draw]   (2) at (-3, 4){2};
\node [rectangle,draw]   (3) at (-1, 4){3};
\node [rectangle,draw]   (4) at (-5, 3){4};
\node [rectangle,draw]   (5) at (-3, 3){5};
\node[draw=none] (d1t) at (-10,4) {A};
\node[draw=none] (d1t) at (-10,3) {B };
\node[draw=none] (d1t) at (-10,2) {C};
	\draw[->,color=red]  (1) to (2);
	\draw[->,color=red]  (2) to (3);
	\draw[->,color=blue]  (1) to (5);
	\draw[->,color=green]  (4) to (5);
	\draw[red] (-9,4) -- (-7,4);
	\draw[blue] (-9,3) -- (-7,3);
	\draw[green] (-9,2) -- (-7,2);
\end{tikzpicture}
\caption{System $T_3$} 
\label{fig:T3}
\end{figure}

\end{document}